\newtheorem{lemma}{Lemma}
\newtheorem{theorem}{Theorem}
\newtheorem{claim}{Claim}
\newtheorem{conjecture}{Conjecture}
\newcommand{\ds}{\displaystyle}
\newcommand{\dss}{\displaystyle\sum}
\newcommand{\lp}{\left(}
\newcommand{\rp}{\right)}
\newcommand{\cA}{\mathcal{A}}
\newcommand{\cF}{\mathcal{F}}
\newcommand{\cH}{\mathcal{H}}
\newcommand{\paren}[1]{\lp#1\rp}%
\newcommand{\norm}[1]{\left\lVert#1\right\rVert}
\let\lk\Gamma
\let\st\Sigma
\def\iv{^{-1}}
\let\bsh\backslash
\def\dist{{\fam0 dist}}
\def\hf{{\frac12}}
\def\pS{{\widetilde S}}
\def\vi{{v_i}}
\def\vipo{{v_{i+1}}}
\def\vz{{v_0}}
\def\uz{{u_0}}
\def\far{\Phi}
\let\br=B
\let\fr=F
\def\fri{\fr_i}
\def\rvi{p_i}
\def\mrvi{{\alpha_i}}
\def\nrvi{{\beta_i}}
\def\ui{{u_i}}
\def\uipo{{u_{i+1}}}
\def\dd{{d'}}
\def\dR{R'}
\def\dbr{{\br'}}
\def\dbri{{\br'_i}}
\title{Maximum spectral radius of outerplanar 3-uniform hypergraphs}
\author{
M.~N.~Ellingham
\thanks{Vanderbilt University, Nashville, TN 37240,
({\tt mark.ellingham@vanderbilt.edu}).  This author was supported in part by Simons Foundation award no.~429625.}
\and
Linyuan Lu
\thanks{University of South Carolina, Columbia, SC 29208,
({\tt lu@math.sc.edu}). This author was supported in part by NSF grant
DMS-1600811 and a Southeastern Conference Visiting Faculty Travel
Grant.} \and
Zhiyu Wang \thanks{Georgia Institute of Technology, Atlanta, GA, 30332,
({\tt zwang672@gatech.edu}).} 
}
\begin{document}

\maketitle

\begin{abstract}
In this paper, we study the maximum spectral radius of outerplanar $3$-uniform hypergraphs. Given a hypergraph $\mathcal{H}$, the shadow of $\mathcal{H}$ is a graph $G$ with $V(G)= V(\mathcal{H})$ and $E(G) = \{uv: uv \in h \textrm{ for some } h\in E(\mathcal{H})\}$.
A graph is \textit{outerplanar} if it can be embedded in the plane such that all its vertices lie on the outer face. A $3$-uniform hypergraph $\mathcal{H}$ is called \textit{outerplanar} if its shadow has an outerplanar embedding such that every hyperedge of $\mathcal{H}$ is the vertex set of an interior triangular face of the shadow. Cvetkovi\'c and Rowlinson conjectured in 1990 that among all outerplanar graphs on $n$ vertices, the graph $K_1+ P_{n-1}$ attains the maximum spectral radius. We show a hypergraph analogue of the Cvetkovi\'c-Rowlinson conjecture. In particular, we show that for sufficiently large $n$, the $n$-vertex outerplanar $3$-uniform hypergraph of maximum spectral radius is the unique $3$-uniform hypergraph whose shadow is $K_1 + P_{n-1}$.
\end{abstract}

\section{Introduction}

A graph $G$ is {\em planar} if it can be embedded in the plane, i.e., it can be drawn on the plane in such a way that edges intersect only at their endpoints. A graph is {\em outerplanar} if it can be embedded in the plane so that all vertices lie on the boundary of its outer face. 
The study of the spectral radius of (outer)planar graphs has a long history, dating back to Schwenk and Wilson \cite{Schwenk-Wilson78}. Given a graph $G$, the \emph{spectral radius} $\lambda$ of $G$ is the largest eigenvalue of the adjacency matrix of $G$. The spectral radius of planar graphs is useful in geography as a measure of the overall connectivity of a planar graph \cite{Boots-Royle91, Cvetkovic-Rowlinson90}. It is therefore of interest to geographers to find the maximum spectral radius of a planar graph as a theoretical upper bound for the connectivity of networks. Boots and Royle \cite{Boots-Royle91}, and independently Cao and Vince \cite{Cao-Vince93} conjectured that the extremal planar graph achieving the maximum spectral radius is $K_2 + P_{n-2}$ (see Figure \ref{fig:extremal}). Hong \cite{Yuan88} first showed that for an $n$-vertex plananr graph $G$, $\lambda(G) \leq \sqrt{5n-11}$. This was subsequently improved in a series of papers \cite{Cao-Vince93, Yuan95, Guiduli96, Yuan98, Ellinghan-Zha00}. Guiduli and Hayes \cite{Guiduli-Hayes98} showed in an unpublished preprint that the Boots-Royle-Cao-Vince conjecture is true for sufficiently large $n$.
For outerplanar graphs, it is conjectured by Cvetkovi\'c and Rowlinson \cite{Cvetkovic-Rowlinson90} that among all outerplanar graph on $n$ vertices, $K_1 + P_{n-1}$ attains the maximum spectral radius (see Figure \ref{fig:extremal}). Partial progress has been made by Rowlinson \cite{Rowlinson90}, Cao and Vince \cite{Cao-Vince93}, and Guiduli and Hayes \cite{Guiduli-Hayes98}.
Recently, Tait and Tobin \cite{Tait-Tobin17} proved the Boots-Royle-Cao-Vince conjecture and the Cvetkovi\'c-Rowlinson conjecture for large enough $n$. Lin and Ning \cite{Lin-Ning19} showed that the Cvetkovi\'c-Rowlinson conjecture holds for all $n\geq 2$ except for $n = 6$.

\begin{figure}[htb]
\hbox to \hsize{
	\hfil
	\resizebox{5cm}{!}{\begin{tikzpicture}[scale=1, Wvertex/.style={circle, draw=black, fill=white, scale=1}, bvertex/.style={circle, draw=black, fill=black, scale=0.3}]

\node [bvertex, label=right:$v_0$] (v0) at (-0.5, -0.5) {};
\node [bvertex, label=above:$v_1$] (v1) at (-3, 1) {};
\node [bvertex, label=above:$v_2$] (v2) at (-2, 1) {};
\node [bvertex] (v3) at (-1, 1) {};
\node [bvertex] (v4) at (0, 1) {};
\node [bvertex, label=above:$v_{n-2}$] (v5) at (1, 1) {};
\node [bvertex, label=above:$v_{n-1}$] (v6) at (2, 1) {};

\draw (v0) -- (v1);
\draw (v0) -- (v2);
\draw (v0) -- (v3);
\draw (v0) -- (v4);
\draw (v0) -- (v5);
\draw (v0) -- (v6);

\draw (v1) -- (v2);
\draw (v3) -- (v2); 
\draw [dashed] (v4) -- (v3) node [midway, fill=white, above=3pt] {$...$};
\draw (v5) -- (v4);
\draw (v6) -- (v5);

\end{tikzpicture}}%
	\hfil
	\resizebox{5cm}{!}{\begin{tikzpicture}[scale=1, Wvertex/.style={circle, draw=black, fill=white, scale=1}, bvertex/.style={circle, draw=black, fill=black, scale=0.3}]

\node [bvertex, label=below:$x$] (x) at (-2.5, -1) {};
\node [bvertex, label=above:$y$] (y) at (-2.5, 1) {};
\node [bvertex, label=above:$v_1$] (v1) at (-2, 0) {};
\node [bvertex, label=above:$v_2$] (v2) at (-1, 0) {};
\node [bvertex] (v3) at (0, 0) {};
\node [bvertex] (v4) at (1, 0) {};
\node [bvertex, label=above:$v_{n-3}$] (v5) at (2, 0) {};
\node [bvertex, label=above:$v_{n-2}$] (v6) at (3, 0) {};

\draw (x) -- (y);
\draw (x) -- (v1);
\draw (x) -- (v2);
\draw (x) -- (v3);
\draw (x) -- (v4);
\draw (x) -- (v5);
\draw (x) -- (v6);

\draw (y) -- (v1);
\draw (y) -- (v2);
\draw (y) -- (v3);
\draw (y) -- (v4);
\draw (y) -- (v5);
\draw (y) -- (v6);

\draw (v1) -- (v2);
\draw (v3) -- (v2); 
\draw [dashed] (v4) -- (v3);
\draw (v5) -- (v4);
\draw (v6) -- (v5);

\end{tikzpicture}}%
	\hfil
}
    \caption{The graphs $K_1 + P_{n-1}$ (left) and $K_2 + P_{n-2}$ (right).}
    \label{fig:extremal}
\end{figure}

In this paper, we extend the investigations into the maximum spectral radius of outerplanar $3$-uniform hypergraphs. Given a hypergraph $\cH$, the \textit{shadow} of $\cH$, denoted by $\partial(\cH)$, is a $2$-uniform graph $G$ with $V(G) = V(\cH)$ and $E(G) = \{uv: uv \in h \textrm{ for some } h\in E(\cH)\}$.

We adopt Zykov's \cite{Zykov74} definition of hypergraph planarity. In particular, a $3$-uniform hypergraph $\cH$ is called {\em planar} if $\partial(\cH)$ has a planar embedding so that every hyperedge of $\cH$ is the vertex set of a triangular face of $\partial(\cH)$. A $3$-uniform hypergraph $\cH$ is called {\em outerplanar} if $\partial(\cH)$ has an outerplanar embedding such that every hyperedge of $\cH$ is the vertex set of an interior triangular face of $\partial(\cH)$.

Now we define the spectral radius of an $r$-uniform hypergraph. Given positive integers $r$ and $n$, an order $r$ and dimension $n$ \textit{tensor} $\cA = (a_{i_1 i_2 \cdots i_r})$ over $\mathbb{C}$ is a multidimensional array with all entries $a_{i_1 i_2 \cdots i_r} \in \mathbb{C}$ for all $i_1, i_2, \cdots, i_r \in [n] = \{1,2,\dots,n\}$.
Given a column vector $\bm{x} = (x_1, x_2, \cdots, x_n)^T \in \mathbb{C}^n$, $\cA \bm{x}^{r-1}$ is defined to be a vector in $\mathbb{C}^n$ whose $i$th entry is 
$$(\cA \bm{x}^{r-1})_i = \dss_{i_2, \cdots,i_r= 1}^n a_{i i_2 \cdots i_r} x_{i_2} \cdots x_{i_r}.$$
In 2005, Qi \cite{Qi05} and Lim \cite{Lim05} independently proposed the definition of eigenvalues of a tensor. In particular, if there exists a number $\lambda \in \mathbb{C}$ and a nonzero vector $\bm{x} \in \mathbb{C}^n$ such that 
$$\cA \bm{x}^{r-1} = \lambda \bm{x}^{[r-1]}$$
where $\bm{x}^{[r-1]} = (x_1^{r-1}, x_2^{r-1}, \cdots, x_n^{r-1})^T$, then $\lambda$ is called the \textit{eigenvalue} of $\cA$ and $\bm{x}$ is called an \textit{eigenvector} of $\cA$ corresponding to $\lambda$. The \textit{spectral radius} of $\cA$, denoted by $\lambda(\cA)$, is the maximum modulus of the eigenvalues of $\cA$. It was shown in \cite{Qi13} that 
$$\lambda(\cA) = \max_{\substack{||\bm{x}||_r=1\\
\bm{x} \in \mathbb{R}_{+}^n}} \bm{x}^T \cA \bm{x}^{r-1},$$
where $||\bm{x}||_r:=(|x_1|^r+|x_2|^r+\cdots+|x_n|^r)^{1/r}$ and $\mathbb{R}_{+}$ is the set of nonnegative real numbers. 

In 2012, Cooper and Dutle \cite{Cooper-Dutle12} defined the \textit{adjacency tensor} of an $r$-uniform hypergraph. Given an $r$-uniform hypergraph $\cH$ on $n$ vertices, the adjacency tensor $\cA(\cH)$ of $\cH$ is defined as the order $r$ dimension $n$ tensor with entries $a_{i_1 i_2 \cdots i_r}$ such that 
\begin{equation*}
    a_{i_1 i_2 \cdots i_r} = \begin{cases}
                                    \frac{1}{(r-1)!} & \textrm{if } \{i_1, i_2, \cdots i_r\} \in E(\cH)\\
                                    0 & \textrm{otherwise.}
                             \end{cases}
\end{equation*}
Let $\lambda(\cH)$ denote the spectral radius of $\cA(\cH)$. Given an $r$-uniform hypergraph $\cH$ and a vector $\bm{x}=(x_1,x_2,\ldots,x_n) \in\mathbb{R}^n$, we can define a multi-linear function $P_{\cH}(\bm{x}): \mathbb{R}^n \to \mathbb{R}$ as follows:
\[
P_H(\bm{x})=r\sum_{\{i_1,i_2,\ldots,i_r\}\in E(\cH)}x_{i_1}x_{i_2}\cdots x_{i_r}.
\] Then the spectral radius of $\cH$ can be also expressed as
\begin{equation*}
\lambda(\cH):=\max_{||\bm{x}||_r=1}P_{\cH}(\bm{x}) = \max_{\bm{x}\in \mathbb{R}_{+}^n} \frac{P_{\cH}(\bm{x})}{\norm{\bm{x}}_r^r}.
\end{equation*}

The Perron-Frobenius theorem \cite{CPZ08, FGH13} for nonnegative tensors implies that there is always a nonnegative vector $\bm{x}$ satisfying the maximum at right above.  Any such $\bm{x}$ is called a \textit{Perron-Frobenius eigenvector} of $\cA(\cH)$ (corresponding to $\lambda(\cH)$).
If $\cH$ is connected then a Perron-Frobenius eigenvector is strictly positive and is unique up to scaling by a positive coefficient; moreover, the spectral radius $\lambda(\cH)$ is the unique eigenvalue with a strictly positive eigenvector.
By definition, the spectral radius $\lambda(\cH)$ and its eigenvector $\bm{x} = (x_1, \cdots,x_n)$ also satisfy the following \textit{eigenequation} for every $x_i$:

\begin{equation*}
\lambda(H)x_i^{r-1} = \sum_{\{i,i_2,\ldots,i_r\}\in E(H)}x_{i_2}\cdots x_{i_r}
~~\text{for}\ x_i>0.
\end{equation*}

Now we are ready to state our main theorem. We use $\cF_n$ to denote the \textit{fan hypergraph}, i.e., the unique $3$-uniform hypergraph whose shadow is $K_1 + P_{n-1}$ (see Figure \ref{fig:extremal}).

\begin{theorem}\label{thm:outer}
For large enough $n$, the $n$-vertex outerplanar $3$-uniform hypergraph of maximum spectral radius is the fan hypergraph $\cF_n$.
\end{theorem}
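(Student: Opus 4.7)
Let $\cH^*$ be an $n$-vertex outerplanar $3$-uniform hypergraph of maximum spectral radius, with Perron-Frobenius eigenvector $\bm{x}$. My plan is to show $\cH^* \cong \cF_n$ in four stages.

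First I would reduce to the hyperedge-maximal case. By Perron-Frobenius, adjoining a hyperedge strictly increases $\lambda$, so $\cH^*$ must be hyperedge-maximal. If $\partial(\cH^*)$ were not already a triangulated $n$-gon, some interior face would be a $k$-gon with $k \ge 4$; adjoining the hyperedge on three consecutive vertices of that face preserves outerplanarity and strictly increases $\lambda$, a contradiction. Hence $\partial(\cH^*)$ triangulates the outer polygon, every interior triangular face is a hyperedge of $\cH^*$, and $|E(\cH^*)| = n-2$.

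Next I would establish the spectral scale and the concentration of $\bm{x}$. Testing the Rayleigh quotient for $\cF_n$ with $y_{v_0} = ((n-1)/2)^{1/3}$ and $y_{v_i} = 1$ gives $\lambda(\cF_n) \ge (1+o(1))(4n)^{1/3}$, whence $\lambda^* := \lambda(\cH^*) \ge (1+o(1))(4n)^{1/3}$. Normalize $\bm{x}$ so that $\max_v x_v = x_u = 1$. The eigenequation at any $v \ne u$ gives $\lambda^* x_v^2 \le d_{\cH^*}(v)$, hence $x_v \le \sqrt{d_{\cH^*}(v)/\lambda^*}$. Since $\sum_v d_{\cH^*}(v) = 3(n-2)$, only $O(n^{2/3})$ vertices have $d_{\cH^*}(v) \ge n^{1/3}$, and for the remaining low-degree vertices $x_v = O(n^{-1/6}) = o(1)$. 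The eigenequation at $u$, $\lambda^* = \sum_{\{u,a,b\} \in E(\cH^*)} x_a x_b$, then forces $d_{\cH^*}(u) = \Omega(n^{2/3})$, identifying $u$ as the natural candidate ``center'' of the fan.

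The crux is to show that $u$ lies in every hyperedge of $\cH^*$. Assume for contradiction that $e = \{a,b,c\} \in E(\cH^*)$ with $u \notin e$ and, without loss of generality, $x_a \le x_b, x_c$. If one could simply replace $e$ by $e' = \{u,b,c\}$ and obtain an outerplanar hypergraph $\cH'$, then $P_{\cH'}(\bm{x}) - P_{\cH^*}(\bm{x}) = 3 x_b x_c (x_u - x_a) > 0$, contradicting extremality of $\cH^*$. A naive swap typically destroys outerplanarity, so the swap must be implemented as a sequence of diagonal flips of the polygon triangulation $\partial(\cH^*)$, leveraging its dual-tree structure to produce intermediate outerplanar hypergraphs along which the Rayleigh quotient is monotone non-decreasing. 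Once $u$ is in every hyperedge of the maximal outerplanar hypergraph $\cH^*$, the remaining $n-1$ vertices must appear consecutively along the outer cycle with hyperedges $\{u, v_i, v_{i+1}\}$, and hence $\cH^* \cong \cF_n$.

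The principal obstacle is the shifting step above. The numerical gain $3 x_b x_c (x_u - x_a)$ is immediate, but converting it into a globally valid sequence of outerplanarity-preserving flips requires detailed control of the flip graph of polygon triangulations together with the eigenvector bounds from stage two. Handling the borderline case where $x_a$ may be comparable to $x_u$, which forces one to argue about the entire link of $u$ rather than $u$ alone, is the most technically demanding part of the proof.
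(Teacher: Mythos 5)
Your stage 1 (reduction to a triangulated polygon) and the Rayleigh-quotient lower bound $\lambda(\cF_n)\ge (1+o(1))(4n)^{1/3}$ are fine and match the paper. The problems begin in stage 2 and become decisive in stage 3.

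\textbf{Stage 2 has a computational error and is far too weak.} From $\lambda x_v^2 \le d_{\cH^*}(v)$ and the threshold $d_{\cH^*}(v) < n^{1/3}$, you get only $x_v \le \sqrt{n^{1/3}/\lambda} = O(1)$, since $\lambda = \Theta(n^{1/3})$ --- not $O(n^{-1/6})$. To squeeze out $x_v = O(n^{-1/6})$ you would need $d_{\cH^*}(v) = O(1)$, at which point the degree-sum argument no longer controls the number of exceptional vertices. So the naive eigenequation bound gives essentially no concentration. The subsequent claim $d_{\cH^*}(u) = \Omega(n^{2/3})$ therefore does not follow: the hyperedges at $u$ through the $O(n^{2/3})$ ``high-degree'' vertices (whose $x$-values you cannot bound below $1$) already leave slack of order $n^{2/3}\gg\lambda$, so the eigenequation at $u$ imposes no useful lower bound on $d_{\cH^*}(u)$. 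The paper needs, and obtains with considerable effort (Lemma~\ref{lem:weak_big_degree}, proved via Claim~\ref{cl:weak_big_degree} and a careful second-moment expansion that exploits the tree-like dual structure of the maximal outerplanar graph), the much stronger pair of bounds $x_v = O(n^{-1/3})$ for \emph{every} $v\ne \vz$ and $d_G(\vz) \ge n - O(n^{2/3})$. Nothing in your sketch approaches that.

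\textbf{Stage 3 is the heart of the proof and is not solved.} Diagonal flips of a polygon triangulation do not, in general, increase the Rayleigh quotient: replacing the hyperedges $\{a,b,c\},\{a,b,d\}$ by $\{a,c,d\},\{b,c,d\}$ changes the relevant part of $P_{\cH}(\bm{x})$ by $x_c x_d(x_a+x_b)-x_a x_b(x_c+x_d)$, which has no definite sign. So ``a sequence of flips along which the Rayleigh quotient is monotone non-decreasing'' is exactly what must be constructed, and you offer no mechanism to do so. The paper sidesteps flips entirely. It first proves (Lemma~\ref{lem:v1-degree1}) that the extremal $\cH$ has $d_\cH(v_1)=1$ and $x_{v_2}\ge x_{v_1}$, by an explicit reflection move, which combined with the eigenequation at $v_1$ gives $x_{v_1} \ge 1/\lambda = \Omega(n^{-1/3})$. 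Then, if some vertex is not adjacent to $\vz$, the dual tree has a leaf triangle $\{w,s,t\}$ with $w,s,t\ne\vz$; removing it and attaching $\{w,\vz,v_1\}$ (which is legal because $v_1$ has degree $2$ in the shadow) changes the objective by $x_w(x_{\vz}x_{v_1}-x_s x_t)$. Positivity of this gain requires precisely the two quantitative facts you are missing: $x_s x_t = O(n^{-2/3})$ (from the strong concentration) and $x_{\vz}x_{v_1}=x_{v_1}=\Omega(n^{-1/3})$ (from the degree-$1$ lemma). Without an analogue of Lemma~\ref{lem:v1-degree1} you have no vertex whose eigenvector entry is provably $\Omega(n^{-1/3})$, and without the $O(n^{-1/3})$ bound you cannot show the removed triangle contributes less than the new one, so even the paper's ``remove a leaf, reattach at the fan tip'' move --- let alone your flip sequence --- cannot be made to work from what you have established.
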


The shadow of the extremal hypergraph attaining the maximum spectral radius among all outerplane $3$-uniform hypergraphs is exactly the extremal graph attaining the maximum spectral radius among all outerplanar graphs. This motivates us to make the following analogous conjecture for planar $3$-uniform hypergraphs:

\begin{conjecture}\label{thm:planar}
For large enough $n$, the $n$-vertex planar $3$-uniform hypergraph graph $\cH$ of maximum spectral radius is the unique maximal hypergraph whose shadow is $K_2 + P_{n-2}$. 
\end{conjecture}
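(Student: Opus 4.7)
The plan is to mirror the approach of Theorem~\ref{thm:outer} (the outerplanar case), with the single apex $v_0$ of the fan replaced by a pair of apex vertices $x,y$ playing the role of the $K_2$-summand of $K_2 + P_{n-2}$. I proceed in three phases: (i) establish the leading-order behavior of $\lambda(\cF'_n)$, where $\cF'_n$ is the maximal planar 3-uniform hypergraph with shadow $K_2+P_{n-2}$; (ii) show that any extremal planar 3-uniform hypergraph $\cH$ has its Perron eigenvector concentrated on two apex vertices, and that planarity plus maximality then force $\partial(\cH) = K_2 + H$ for some maximal outerplanar graph $H$ on $n-2$ vertices; and (iii) show that $H$ must equal $P_{n-2}$.

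For phase (i), a symmetric trial eigenvector assigning $a$ to the two apices and $b$ to the path vertices satisfies the eigenequations to leading order: a typical path vertex lies in four hyperedges (two through each apex) each contributing $ab$, so $\lambda b \approx 4a$, while each apex lies in roughly $n-3$ hyperedges of the type $\{x,v_i,v_{i+1}\}$ contributing $b^2$, so $\lambda a^2 \approx (n-3)b^2$.  Eliminating $b$ yields $\lambda(\cF'_n) = (1+o(1))(16n)^{1/3}$, which serves as the lower bound to beat.

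For phase (ii), let $\cH$ be extremal with Perron eigenvector $\bm{x}$ normalized by $\|\bm{x}\|_3=1$, so $\lambda := \lambda(\cH) \geq (1+o(1))(16n)^{1/3}$.  We may assume $\cH$ is connected and \emph{maximal}, meaning every interior triangular face of $\partial(\cH)$ is a hyperedge, since adding missing face-hyperedges only increases $\lambda$; in particular $\partial(\cH)$ is a planar triangulation with at most $2n-4$ hyperedges, and the hyperedge-degree of each vertex $w$ is at most its shadow-degree $\deg_{\partial\cH}(w)$.  Combining the eigenequation $\lambda x_w^2 = \sum_{\{w,i,j\} \in E(\cH)} x_i x_j$ with the planarity bound $\sum_w \deg_{\partial\cH}(w) \leq 6n-12$, I argue that at most two vertices can carry eigenvector weight of constant order; designate them $x$ and $y$ and show all other $x_w = O(n^{-1/3})$.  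A swap argument then shows every hyperedge meets $\{x,y\}$: exchanging a hyperedge disjoint from $\{x,y\}$ for a triangular face-hyperedge incident to an apex produces a strict gain in $\lambda$, because the replaced term has size $O(n^{-1})$ while the new term has size $\Theta(n^{-1/3})$.  Consequently $\partial(\cH)$ contains $\{x,y\}$ joined to all other vertices, and maximality forces $\partial(\cH) = K_2 + H$ for some maximal outerplanar graph $H$ on $n-2$ vertices.

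For phase (iii), the hypergraph $\cH$ is determined by $H$ (and by the choice of outer face): its hyperedges are the triangles $\{x,u,v\}$ and $\{y,u,v\}$ for each edge $uv$ of $H$, split between the two sides of the outerplanar embedding of $H$, together with the end-caps that contain the edge $xy$.  A weighted spectral comparison reduces the remaining task to maximizing a bilinear expression over the eigenvector coordinates of the path-vertices among all maximal outerplanar $H$ on $n-2$ vertices.  The main obstacle lies precisely here: unlike the outerplanar case of Theorem~\ref{thm:outer}, where the path structure emerges from the concentration argument itself, in the planar setting the concentration argument only fixes the two apices, and a second-order Perron-vector perturbation is needed to distinguish $P_{n-2}$ (the ``linear fan'' of triangles) from other maximal outerplanar graphs on $n-2$ vertices.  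A natural strategy is to invoke Theorem~\ref{thm:outer} in a weighted form, viewing the planar extremal problem over $\cH$ with fixed apices as an outerplanar extremal problem on $H$ with an apex-weighted objective; the expected answer is that the path uniquely optimizes this weighted problem, and Kelmans-style swaps on $H$ (replacing a branching maximal outerplanar triangulation by a ``straighter'' one) should close the argument.
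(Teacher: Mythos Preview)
The statement you are attempting to prove is labeled a \emph{conjecture} in the paper and is presented without proof; the authors state it only as a natural analogue of Theorem~\ref{thm:outer} motivated by the fact that the shadow of the outerplanar extremal hypergraph coincides with the outerplanar extremal graph. There is therefore no proof in the paper to compare your proposal against.

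Your proposal is not a proof either; it is an outline of a strategy, and you yourself flag Phase~(iii) as incomplete (``the main obstacle lies precisely here''; ``the expected answer is\dots''; ``should close the argument''). Beyond that acknowledged gap, Phase~(ii) contains substantial unacknowledged gaps. The assertion that ``at most two vertices can carry eigenvector weight of constant order'' is the planar analogue of Lemma~\ref{lem:weak_big_degree}, whose proof in the outerplanar case occupies several pages of careful bookkeeping (Claim~\ref{cl:weak_big_degree} and the subsequent bootstrapping); you give no indication of how to carry this through with two apices, where the interaction between the two large-weight vertices must be controlled. The swap argument is also underspecified: you must exhibit, for any hyperedge disjoint from $\{x,y\}$, a \emph{planar} modification that removes it and inserts an apex-incident face-hyperedge---this is the role played in the paper by the explicit ``flip $F_1$ over'' construction in Lemma~\ref{lem:v1-degree1} and the leaf-of-dual-tree argument in the proof of Theorem~\ref{thm:outer}, and it does not come for free. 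Finally, the inference ``every hyperedge meets $\{x,y\}$, hence both $x$ and $y$ are adjacent to every other vertex'' is a non-sequitur as stated: a hyperedge meeting $\{x,y\}$ might meet only one of them, and additional structural work is needed to force both apices to dominate.

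In short, the paper offers no proof, and your proposal is a reasonable-looking plan whose hardest steps (eigenvector concentration on exactly two vertices; planarity-preserving swaps; the final identification of $H=P_{n-2}$) remain to be carried out.
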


\section{Proof of Theorem \ref{thm:outer}}

Let $\cH$ be an $n$-vertex outerplanar $3$-uniform hypergraph of maximum spectral radius.
Let $G$ be the shadow of $\cH$, i.e., $V(G) = V(\cH)$ and $E(G) = \{vu: \{v,u\}\subseteq h \textrm{ for some } h\in E(\cH) \}$. It follows by definition that $G$ is outerplanar, and thus does not contain a $K_{2,3}$ minor or a $K_4$ minor.
Observe that $\cH$ must be edge-maximal (while maintaining the outerplanarity). Otherwise, we can obtain an outerplanar hypergraph $\cH'$ such that $\cH\subsetneq \cH'$. It then follows from the Perron-Frobenius Theorem that $\cH'$ attains a larger spectral radius than $\cH$, giving us a contradiction. Now since $\cH$ is edge-maximal, $G$ must be a maximal outerplanar graph, with $2n-3$ edges.
Then $G$ is $2$-connected, and has an outerplanar embedding, unique up to homeomorphisms of the plane, whose outer face is bounded by a Hamilton cycle.  We always assume $G$ has this outerplanar embedding.
All interior faces of $G$ are triangles, and every triangle of $G$ is a facial triangle and a hyperedge of $\cH$.
The dual of $G$ (excluding the outer face) is a tree, so the interior faces of $G$ are connected together in a treelike fashion.

We use $N(v)$ to denote the set of neighbors of $v$ in $G$, i.e., $N(v) = \{u: vu\in E(G)\}$ and $d(v)$ to denote the degree of $v$, i.e., $d(v)=|N(v)|$. We also use $d_F(v)$ to refer to degree in a subgraph $F$ of $G$.  The closed neighborhood of $v$, denoted by $N[v]$, is defined as $N[v] = N(v)\cup \{v\}$.
More generally, we let $\dist(u,v)$ denote the distance between $u$ and $v$ in $G$, and $N_k(v) = \{ u \in V(G) : \dist(v, u) = k\}$.
Given an edge $uw$ and vertex $v$ define the \emph{level of $uw$ relative to $v$} to be $(\dist(u, v) + \dist(w, v))/2$, which is an integer or half-integer.

Let $\lk(v) = \{uw: \{v,u,w\}\in E(\cH)\}$ be the link of $v$ in $\cH$, and $d_\cH(v) = |\lk(v)|$ be the degree of $v$ in $\cH$.
The edges in $\lk(v)$ form an induced path in $G$ whose ends are the neighbors of $v$ on the outer cycle.
For each edge $e$ of $G$, $\lk\iv(e)$ is the set of vertices forming a triangle with $e$, and contains one vertex if $e$ is an outer edge, and two vertices otherwise.
We also use $\st(v)$ to denote the set of edges incident with $v$ in $G$.  In our situation the edges in $\st(v)$ and $\lk(v)$ are precisely the edges at levels $\hf$ and $1$, respectively, relative to $v$.

Suppose we are given an edge $uw$ and a vertex $v$ not incident with $uw$.  If $uw$ is an outer edge, define $\far(uw, v)$ to be the empty graph.  Otherwise, $G\bsh\{u,w\}$ has two components; define $\far(uw, v)$ to be the component not containing $v$, together with all edges joining that component to $u$ or $w$.  Loosely, $\far(uw, v)$ is the subgraph of $G$ on the far side of $uw$ from $v$.

\begin{lemma}\label{lem:spec_wheel}
$\lambda(\cH) \geq \sqrt[3]{4(n-1)}\paren{1-\frac{1}{n-1}}$.
\end{lemma}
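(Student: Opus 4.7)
The plan is to obtain the lower bound by exhibiting an explicit outerplanar competitor, namely the fan hypergraph $\cF_n$ itself. Since $\cH$ has maximum spectral radius among $n$-vertex outerplanar $3$-uniform hypergraphs and $\cF_n$ is easily seen to be outerplanar (embed $v_1, \dots, v_{n-1}, v_0$ around the outer cycle), we have $\lambda(\cH) \geq \lambda(\cF_n)$. Hence it suffices to prove $\lambda(\cF_n) \geq \sqrt[3]{4(n-1)}(1 - \tfrac{1}{n-1})$.

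To bound $\lambda(\cF_n)$ from below, I will use the variational characterization $\lambda(\cF_n) = \max_{\bm{x} \in \mathbb{R}_+^n} P_{\cF_n}(\bm{x})/\norm{\bm{x}}_3^3$ with a simple test vector that reflects the symmetry of $\cF_n$: assign weight $1$ to the apex $v_0$ and weight $t > 0$ to each of the $n-1$ path vertices $v_1, \dots, v_{n-1}$. Since $\cF_n$ has exactly $n-2$ hyperedges, each of the form $\{v_0, v_i, v_{i+1}\}$, this gives
\[
\frac{P_{\cF_n}(\bm{x})}{\norm{\bm{x}}_3^3} = \frac{3(n-2)\,t^2}{1 + (n-1)t^3}.
\]

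I then optimize over $t > 0$. Setting the derivative to zero yields the critical equation $(n-1)t^3 = 2$, so $t^3 = 2/(n-1)$ and $1 + (n-1)t^3 = 3$. Substituting back,
\[
\lambda(\cF_n) \;\geq\; \frac{3(n-2)\,t^2}{3} \;=\; (n-2)\left(\frac{2}{n-1}\right)^{2/3} \;=\; \sqrt[3]{4(n-1)}\cdot\frac{n-2}{n-1} \;=\; \sqrt[3]{4(n-1)}\left(1 - \frac{1}{n-1}\right),
\]
using $2^{2/3} = 4^{1/3}$. This is exactly the stated bound.

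There is no real obstacle here; the main task is to choose the correct test vector and carry out the one-variable optimization. The only care needed is to verify that $\cF_n$ really does qualify as an outerplanar $3$-uniform hypergraph (every hyperedge $\{v_0, v_i, v_{i+1}\}$ bounds an interior triangular face in the natural embedding), which justifies the inequality $\lambda(\cH) \geq \lambda(\cF_n)$ at the start.
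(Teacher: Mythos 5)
Your proposal is correct and essentially identical to the paper's proof: both bound $\lambda(\cH)$ by $\lambda(\cF_n)$ and then evaluate $P_{\cF_n}$ on the same (up to scaling) test vector, with apex weight $1$ and path-vertex weight $t = (2/(n-1))^{1/3}$. The paper simply writes down the already-normalized optimal vector directly, whereas you derive it by a one-variable optimization; the resulting computation and bound are the same.
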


\begin{proof}

Let $\cF_n$ be the fan hypergraph on $n$ vertices, i.e., the unique $3$-uniform hypergraph on $n$ vertices whose shadow is $K_1 + P_{n-1}$. Suppose $w$ is the vertex that is adjacent to all the other vertices in $\partial(\cF_n)$ and $v_1, v_2, \cdots,v_{n-1}$ are its neighbors. Clearly $\cF_n$ is outerplanar.
Consider the vector $\bm{x} \in \mathbb{R}^n$ with $x_{w} = 1/\sqrt[3]{3}$ and $x_\vi = \paren{\frac{2}{3(n-1)}}^{1/3}$. Note that $\norm{\bm{x}}_3 =1$.
It follows that 
$$\lambda(\cH) \geq \lambda(\cF_n) \geq P_{\cF_n}(\bm{x}) = 3(n-2) \cdot \frac{1}{\sqrt[3]{3}} \cdot \paren{\frac{2}{3(n-1)}}^{2/3} = \sqrt[3]{4(n-1)}\paren{1-\frac{1}{n-1}}. $$
\end{proof}

Note that since $\cH$ is connected, there exists an eigenvector corresponding to $\lambda(\cH)$ such that
all its entries are strictly positive.
In the rest of this section, for convenience we assume that this Perron-Frobenius eigenvector $x$ of $\cH$ is re-normalized so that the maximum eigenvector entry is $1$. Let $\vz$ be the vertex with the maximum eigenvector entry, so that $x_\vz = 1$.
We also define $\uz$ to be a vertex with the second largest eigenvector entry, i.e., $x_\uz = \max_{v \ne \vz} x_v$.
We abbreviate $\lambda(\cH)$ to $\lambda$, and the eigenequation of $\cH$ tells us that
$\lambda x_v^2 = \sum_{uw \in \lk(v)} x_u x_w$ for every vertex $v$.

The following lemma says that $\cH$ is very close to the fan hypergraph $\cF_n$.

\begin{lemma}\label{lem:weak_big_degree}
We have $\lambda=(1+o(1))\sqrt[3]{4n}$ and
$d_G(\vz) \ge n- O(n^{2/3})$. Moreover, for any other vertex $u \neq \vz$, $x_{u} = O(n^{-1/3})$.
\end{lemma}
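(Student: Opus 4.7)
Start from Lemma~\ref{lem:spec_wheel}, which gives $\lambda \geq (1-o(1))\sqrt[3]{4n}$. The plan is to match this with an upper bound, and simultaneously control the eigenvector entries and $d_G(\vz)$ via a bootstrap argument. A key global estimate comes from summing the eigenequation $\lambda x_v^2 = \sum_{uw \in \lk(v)} x_u x_w$ over all $v\in V(\cH)$: each hyperedge $\{a,b,c\}$ contributes $x_a x_b + x_b x_c + x_a x_c \le 3$ (since each $x_v \le x_\vz = 1$), so
\[ \lambda \sum_v x_v^2 \le 3|E(\cH)| = 3(n-2), \]
and combined with Lemma~\ref{lem:spec_wheel} this gives $\sum_v x_v^2 = O(n^{2/3})$.

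The eigenequation at $\vz$ reads $\lambda = \sum_{uw \in \lk(\vz)} x_u x_w$; bounding each factor by $x_\uz$ gives $\lambda \le (n-2) x_\uz^2$, and hence the lower bound $x_\uz \ge \Omega(n^{-1/3})$. The main step is to establish the matching upper bound $x_u = O(n^{-1/3})$ for every $u \neq \vz$. For this I apply the eigenequation at $u$ and split the right-hand side according to whether the hyperedge contains $\vz$. Because $\lk(u)$ is an induced path in $G$, the vertex $\vz$ appears in at most two edges of this path, so the $\vz$-containing contribution is at most $2 x_\uz$; the remaining contribution is at most $\sum_{v \in N(u) \setminus \{\vz\}} x_v^2$ by AM-GM along the path. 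One then estimates this sum using the global $\ell^2$ bound together with the outerplanarity constraint that any two vertices share at most two common neighbors (so $d_G(\vz) + d_G(u) \le n+2$), bootstrapping to $x_u = O(n^{-1/3})$.

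Plugging $x_u = O(n^{-1/3})$ back into the eigenequation at $\vz$ yields $\lambda \le (n-2)\cdot O(n^{-2/3}) = O(n^{1/3})$, which matched with Lemma~\ref{lem:spec_wheel} gives $\lambda = (1+o(1))\sqrt[3]{4n}$. The degree bound on $\vz$ then follows by rearranging the same inequality: $d_\cH(\vz) \ge \lambda / x_\uz^2$, and combining the sharpened bounds on $\lambda$ and $x_\uz$ gives $d_G(\vz) = d_\cH(\vz) + 1 \ge n - O(n^{2/3})$. The main technical obstacle is the bootstrap in the second paragraph: the naive inequality $\lambda x_u^2 \le 2 x_\uz + d_\cH(u) x_\uz^2$ collapses whenever $d_\cH(u) > \lambda$, and the resolution requires carefully exploiting both the path structure of $\lk(u)$ and the forbidden-$K_{2,3}$-minor constraint to avoid this regime.
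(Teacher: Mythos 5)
Your overall plan — bound $\lambda$ from below via Lemma~\ref{lem:spec_wheel}, show all non-$\vz$ eigenvector entries are $O(n^{-1/3})$, and push this back through the eigenequation at $\vz$ to force $d_G(\vz)$ close to $n$ — is the right shape, and the global $\ell^2$ estimate $\sum_v x_v^2 = O(n^{2/3})$ is a genuinely nice preliminary observation (the paper does not use it; it works directly with the restricted sum $z=\sum_{v\in N(\vz)} x_v^2$). However, there are three places where your argument as stated does not close.

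First, and most seriously, the key step $x_\uz = O(n^{-1/3})$ is never actually established: you write down the first-level inequality $\lambda x_\uz^2 \le 2x_\uz + \sum_{v\in N(\uz)\setminus\{\vz\}} x_v^2$ and then invoke the global $\ell^2$ bound, but that only gives $\lambda x_\uz^2 \le 2x_\uz + O(n^{2/3})$, i.e.\ $x_\uz^2 = O(n^{1/3})$, which is vacuous since $x_\uz \le 1$. You explicitly flag this as ``the main technical obstacle'' and say the resolution ``requires carefully exploiting the path structure and the $K_{2,3}$-free constraint,'' but no such resolution is given. The paper's proof does precisely this hard work: it iterates the eigenequation to a second level (bounding $\lambda z'$ via $R'$), expands $R'$ again via the eigenequation, and then splits the resulting double sum into four parts $S_0',S_1',S_2',S_3'$, each controlled by a careful combinatorial accounting of how many triangles can meet at a vertex in a $K_{2,3}$-minor-free maximal outerplanar graph. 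Moreover, that accounting relies on first establishing the weaker Claim~\ref{cl:weak_big_degree} ($d \ge n - O(n^{5/6})$) to get $d_G(\uz) = O(n^{5/6})$, which your proposal never derives. Without all of this, the bootstrap simply doesn't run.

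Second, even granting $x_u = O(n^{-1/3})$ for all $u\ne\vz$, plugging into $\lambda = \sum_{uw\in\lk(\vz)} x_u x_w$ only gives $\lambda = O(n^{1/3})$, not $\lambda = (1+o(1))\sqrt[3]{4n}$; you need the leading constant, and the crude $O$-bound loses it. The paper instead derives $\lambda^3 \le 4d + 2\lambda R + \ldots$ with explicit lower-order terms, which pins down the constant $4$. Third, your degree bound $d_\cH(\vz) \ge \lambda / x_\uz^2$ suffers the same loss of constants: to conclude $d_G(\vz) \ge n - O(n^{2/3})$ from it you would need $x_\uz^2 \le 4^{1/3}n^{-2/3}\bigl(1+O(n^{-1/3})\bigr)$, a far sharper statement than $x_\uz = O(n^{-1/3})$, and one your argument makes no attempt to prove. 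The paper avoids this entirely by substituting the $O(n^{-1/3})$ bound into the already-established structural inequality $4n - 16 \le 4d + 2\lambda z + 4\lambda + 2\sqrt{dz} + 4(n-d)\,O(n^{-2/3})$, where the coefficient of $n-d$ on the right is $O(n^{-2/3})$ and hence can be absorbed.
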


 We first show a weaker version of Lemma \ref{lem:weak_big_degree}. In particular, we show the following claim.

\begin{claim}\label{cl:weak_big_degree}
$d_G(\vz) \ge n-O(n^{5/6})$.
\end{claim}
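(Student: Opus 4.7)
My plan is to exploit the eigenequation for $v_0$ via H\"older's inequality and a global identity that constrains the $L^3$-mass of the eigenvector outside $v_0$. Write $d = d_G(v_0)$, $R = V(G) \setminus N[v_0]$, $M = \max_{v \ne v_0} x_v$, $A = \sum_{w \in N(v_0)} x_w^3$, $C = \sum_{v \in R} x_v^3$, and $T = \sum_{h \in E(\cH),\, v_0 \notin h} x_a x_b x_c$. Since $\lk(v_0)$ is an induced path on $d$ vertices with $d-1$ edges, applying H\"older with exponents $(3,3,3)$ to $\lambda = \sum_{uw \in \lk(v_0)} 1 \cdot x_u \cdot x_w$ yields $\lambda \le (d-1)^{1/3} A^{2/3}$, and hence $d \ge 1 + \lambda^3/A^2$. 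Lemma~\ref{lem:spec_wheel} gives $\lambda^3 \ge 4n - O(1)$, so the claim reduces to showing $A \le 2 + O(n^{-1/6})$, from which $n - d \le n(1 - 4/A^2) + O(1) = O(n^{5/6})$ follows.

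To bound $A$ I use the identity $\lambda \|x\|_3^3 = P_{\cH}(x)$: separating hyperedges by whether they contain $v_0$ and using $\sum_{h \ni v_0} x_a x_b x_c = \lambda$ gives $A + C = 2 + 3T/\lambda$. Summing $x_v \cdot \lambda x_v^2$ over $v \in R$ further yields $\lambda C = \sum_{v_0 \notin h} |h \cap R|\, x_a x_b x_c$, and since $N(v_0)$ is an induced path (so no hyperedge sits inside $N(v_0)$), every such $h$ meets $R$ in between $1$ and $3$ vertices; therefore $T \le \lambda C \le 3T$, and combined with the identity this gives $A \ge 2$ and $A \le 2 + 2C$. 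Outerplanarity (no $K_{2,3}$-minor) forces any two vertices to share at most two common neighbors, so $d_G(v) \le n - d$ for each $v \in R$. Since $v_0 \notin N(v)$, every neighbor of $v$ has eigenvector entry at most $M$; the eigenequation for $v$ yields $x_v \le M\sqrt{(n-d)/\lambda}$, hence $C \le M^3 (n-d)^{5/2}/\lambda^{3/2}$.

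The main obstacle is bounding $M$. An a priori estimate $M = O(n^{-1/3})$ (the natural fan-scale) would combine with the above to give $C = O(n^{-1/6})$ under the mild constraint $n - d = O(n^{5/6})$, closing the argument. To bound $M$ I apply the eigenequation to $u_0$: when $v_0 \in N(u_0)$, at most two link edges of $u_0$ contain $v_0$, each contributing at most $M$, and the remaining edges contribute at most $d_{\cH}(u_0) M^2$, yielding $\lambda M^2 \le 2M + d_{\cH}(u_0) M^2$ and hence $M = O\bigl(1/(\lambda - d_{\cH}(u_0))\bigr)$; the outerplanarity bound $d_{\cH}(u_0) \le n - d + 1$ then gives $M = O(n^{-1/3})$ provided $n - d = O(\lambda)$. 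To break this circularity I plan to bootstrap: the trivial inequality $\lambda \le d_{\cH,\max}$ already gives $d \ge \lambda + 1 = \Omega(n^{1/3})$; a weaker version of the argument above improves $d$ to a constant fraction of $n$, which is enough to apply the $M$-bound and iterate to $n - d \le O(n^{5/6})$. The complementary case $u_0 \in R$ (so $v_0 \notin N(u_0)$) forces $\lambda \le d_{\cH}(u_0) \le n - d - 1$, hence $n - d \ge \lambda$, and will require a separate structural argument (for instance comparing $\cH$ with a modification where $u_0$ is moved adjacent to $v_0$ while preserving outerplanarity) to rule out the possibility that such a ``secondary hub'' in $R$ is consistent with the maximality of $\lambda(\cH)$.
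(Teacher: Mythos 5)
Your reduction of the claim to bounding $A := \sum_{w\in N(v_0)} x_w^3$ is genuinely different from the paper's route, which expands the eigenequation twice around $v_0$ and counts edges at levels $3/2$ and $2$ rather than invoking any $L^3$ identity. The H\"older step $\lambda\le (d-1)^{1/3}A^{2/3}$, the identity $A+C = 2 + 3T/\lambda$, and the sandwich $T\le\lambda C\le 3T$ (hence $2\le A\le 2+2C$) are all correct. (One minor slip: your refined bound $x_v\le M\sqrt{(n-d)/\lambda}$ for $v\in R$ is actually \emph{worse} than the trivial $x_v\le M$ once $n-d>\lambda$, which is exactly the regime under consideration; you should simply use $C\le (n-d)M^3$, which with $M=O(n^{-1/3})$ and $n-d=O(n^{5/6})$ does give $C=O(n^{-1/6})$, whereas your stated bound $C\le M^3(n-d)^{5/2}/\lambda^{3/2}$ would give only $O(n^{7/12})$.)

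The genuine gap is in bounding $M$. The inequality $\lambda M^2\le 2M + d_\cH(u_0)M^2$ is informative only when $d_\cH(u_0)<\lambda$, i.e., when $n-d<\lambda-1=O(n^{1/3})$. That threshold is far \emph{stronger} than the claim's conclusion $n-d=O(n^{5/6})$, so the bootstrap you sketch cannot close: even granting the claim as already proved, $d_\cH(u_0)$ could be of order $n^{5/6}\gg\lambda$, and your $M$-bound is vacuous. The trivial start $d\ge\lambda+1$ gives $n-d\approx n$, nowhere near the threshold $n-d\lesssim n^{1/3}$ needed to begin the iteration, and ``improving $d$ to a constant fraction of $n$'' (which you do not actually derive) would still leave $n-d=\Theta(n)\gg\lambda$. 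Moreover the case $u_0\in R$ is left entirely open. The paper does prove $x_{u_0}=O(n^{-1/3})$, but only \emph{after} the present claim is established, and via a second-order eigenequation expansion around $u_0$ that isolates the terms involving $x_{v_0}$ and uses the already-proved $d\ge n-O(n^{5/6})$ to control a sum analogous to $R$. A single-application linkage count like yours cannot reproduce that estimate; to salvage your approach you would need an analogue of that two-level expansion, at which point the savings of the H\"older shortcut largely disappear.
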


\begin{proof}[Proof of Claim \ref{cl:weak_big_degree}]
Let $x$ and $\vz$ be as described above, so that $x_\vz=1$. 
Let $d=d(\vz)$ and suppose that $\lk(\vz)$ forms the path $v_1 v_2 \dots v_d$, where $v_1, v_2, \dots, v_d$ are in clockwise order around $\vz$.
Now by the eigenequation for $x_\vz$,
    \[\lambda = \lambda x_\vz^2
            = \dss_{i=1}^{d -1} x_\vi x_\vipo
        \leq \dss_{i=1}^{d} x_\vi^2, \]
using the fact $ab\leq (a^2+b^2)/2$. Set $z = \dss_{i=1}^{d} x_\vi^2$. We have $\lambda \leq z$. It again follows from the eigenequation expansion that
\begin{align}
    \lambda z &= \dss_{i=1}^{d} \lambda x_\vi^2
		= \dss_{i=1}^d \sum_{vw \in \lk(\vi)} x_v x_w
			\nonumber\\
        &\leq  2 \dss_{i=1}^{d} x_\vz x_\vi + 
		\dss_{i=1}^{d} \dss_{vw \in \lk(\vi)\bsh\st(\vz)}
		x_{v} x_{w} \label{eq:lz1}
\end{align}
Define $\br$ to be the subgraph consisting of the edges in $(\bigcup_{i=1}^d \lk(\vi))\bsh\st(\vz)$ and their endvertices. 
The edges of $\br$ are at levels $1\hf$ and $2$ relative to $\vz$.
For $i \in [d-1]$ let $\fri = \far(\vi \vipo, \vz)$ and $\br_i = \br \cap \fri$.
Fig.~\ref{fig:nbr} shows $\br_i$, indicated in bold (and red, if color is visible).
From \eqref{eq:lz1}, using $x_\vz=1$ and the Cauchy-Schwarz inequality, we have
 \begin{align}
    \lambda z &\le  2 \dss_{i=1}^{d} x_\vi + \dss_{vw \in E(\br)} x_{v} x_{w}
		\nonumber\\
        &\le  2 \sqrt{dz} + \dss_{vw \in E(\br)} x_{v} x_{w}.
		 \label{eq:lz2}
\end{align}

\begin{figure}
	\begin{center}
      \resizebox{7cm}{!}{\begin{tikzpicture}[scale=1, Wvertex/.style={circle, draw=black, fill=white, scale=1}, bvertex/.style={circle, draw=black, fill=black, scale=0.3}]

\node [bvertex, label=below right:$v_0$] (v0) at (-0.5, -0.5) {};
\node [bvertex, label=above:$v_1$] (v1) at (-3.5, 1) {};
\node [bvertex, label=above:$v_2$] (v2) at (-2.5, 1) {};
\node [bvertex, label=below left:$v_i$ ] (v3) at (-1, 1) {};
\node [bvertex, label=below right:{\kern-5pt$v_{i+1}$} ] (v4) at (0, 1) {};
\node [bvertex, label=above:$v_{n-2}$] (v5) at (1.5, 1) {};
\node [bvertex, label=above:$v_{n-1}$] (v6) at (2.5, 1) {};

\node [bvertex,label=above:$q_i$] (v7) at (-0.5, 2) {};
\node [bvertex] (v8) at (-1.5, 2) {};
\node [bvertex] (v9) at (0.5, 2) {};

\node [bvertex] (v10) at (-1, 2) {};
\node [bvertex] (v11) at (0, 2) {};

\node [bvertex] (v12) at (-2, 2) {};
\node [bvertex] (v13) at (1, 2) {};

\draw (v0) -- (v1);
\draw (v0) -- (v2);
\draw (v0) -- (v3);
\draw (v0) -- (v4);
\draw (v0) -- (v5);
\draw (v0) -- (v6);

\draw (v1) -- (v2);
\draw[dashed] (v3) -- (v2) node [midway, fill=white, above=3pt] {$...$};
\draw (v4) -- (v3);
\draw[dashed] (v5) -- (v4);
\draw (v6) -- (v5);


\draw[line width=0.65mm, red] (v7) -- (v3);
\draw[line width=0.65mm, red] (v7) -- (v4);
\draw[line width=0.65mm, red] (v7) -- (v10);
\draw[line width=0.65mm, red] (v10) -- (v8);
\draw[line width=0.65mm, red] (v7) -- (v11);
\draw[line width=0.65mm, red] (v11) -- (v9);
\draw[blue] (v3) -- (v8);
\draw[blue] (v4) -- (v9);
\draw[blue] (v4) -- (v11);
\draw[blue] (v3) -- (v10);

\draw[line width=0.65mm, red, dashed] (v9) -- (v13);
\draw[line width=0.65mm, red, dashed] (v8) -- (v12);
\draw[blue] (v4) -- (v13);
\draw[blue] (v3) -- (v12);

\end{tikzpicture}}
    \end{center} 
    \caption{Neighborhood of $\vz$, and $\br_i$}
    \label{fig:nbr}
\end{figure}

For ease of reference, set $R = \sum_{vw \in E(\br)} x_{v} x_{w}$.
Dividing both sides of inequality \eqref{eq:lz2} by $\lambda$, we have
$z- \frac{2\sqrt{d z}}{\lambda} \leq \frac{R}{\lambda}$. By completing the square, we have $\lp \sqrt{z} -\frac{\sqrt{d}}{\lambda} \rp^2 \leq \frac{R}{\lambda} + \frac{d}{\lambda^2}$. Rearranging the terms of the inequality, we obtain that 
\begin{align}
    z &\leq \paren{\frac{\sqrt{d}}{\lambda} + \sqrt{\frac{d}{\lambda^2}+\frac{R}{\lambda}}  }^2 \nonumber \\
      &= \frac{4d}{\lambda^2} + \frac{2R}{\lambda} - \paren{\sqrt{\frac{d}{\lambda^2}+\frac{R}{\lambda}} -\frac{\sqrt{d}}{\lambda}}^2. \label{eq:z-algebra}
\end{align}
It follows that 
\begin{equation}\label{eq:o1}
    \lambda^3 \leq \lambda^2 z \leq 4d + 2\lambda R - \paren{\sqrt{d + R\lambda}-\sqrt{d}}^2.
\end{equation}
By Lemma \ref{lem:spec_wheel}, we obtain that $\lambda^3 \geq 4n - 16$ when $n$ is large enough. 

Now we find a bound on $2\lambda R$.  Using $ab \le (a^2+b^2)/2$ and then the eigenequations, twice, we have
\begin{align}
2\lambda R &= \dss_{vw \in E(\br)} 2 \lambda x_{v} x_{w}
	\nonumber\\
  &\le \dss_{vw\in E(\br)} \lambda (x_{v}^2 + x_{w}^2)
    = \dss_{u \in V(\br)} d_\br(u)\, \lambda x_u^2
	\nonumber\\
  &= \dss_{u \in V(\br)\cap N_1(\vz) } d_\br(u)\, \lambda x_u^2 
    + \dss_{u \in V(\br)\cap N_2(\vz) } d_\br(u)\, \lambda x_u^2
	\nonumber\\
  &\le 2\dss_{i=1}^d \lambda x_\vi^2 
    + \dss_{u \in V(\br)\cap N_2(\vz) } d_\br(u)\,\lambda x_u^2 
	\nonumber\\
  &= 2\lambda z
    + \dss_{u \in V(\br)\cap N_2(\vz) } d_\br(u)\,
	\sum_{vw \in \lk(u)} x_v x_w.
	\label{eq:tlr1}
\end{align}
If $x_v x_w$ appears in the sum above then the level of $vw$ relative to $\vz$ is between $1$ and $3$.

To investigate the sum in \eqref{eq:tlr1} we examine the structure of $\fri$ and $\br_i$ more closely.  If $\fri$ is nonempty then it contains a common neighbor $q_i$ of $\vi$ and $\vipo$. The vertices of $N_2(\vz) \cap \fri$ lie on a path $\rvi^1 \rvi^2 \rvi^3 \dots \rvi^\nrvi$, with $q_i = \rvi^\mrvi$ for some $\mrvi$ with $1 \le \mrvi \le \nrvi$.
Here $\rvi^j$ is adjacent to $\vi$ for $1 \le j \le \mrvi$, and to $\vipo$ for $\mrvi \le j \le \nrvi$.
The subgraph $\br_i$ contains the edges of this path and edges $\vi q_i, \vipo q_i$.
We let $\fri^j = \far(\rvi^j \rvi^{j+1}, \vz)$ be the part of $\fri$ above $\rvi^j \rvi^{j+1}$ for $j \in [\nrvi-1]$.  See Fig.~\ref{fig:multiplicity5}, which illustrates part of $\fri$ and the edge coefficients in the sum from \eqref{eq:tlr1} ($\vi \vipo$ is dashed because it is not an edge of $\fri$).
For $u = \rvi^j$ with $j \notin \{1,\mrvi,\nrvi\}$ we have $d_\br(u)=2$; for $j \in \{1, \mrvi, \nrvi\}$ the value of $d_\br(u)$ depends on whether $\mrvi=1$ or $\mrvi=\nrvi$ or both, but we always have $1 \le d_\br(u) \le 4$, and $d_\br(u) > 2$ only if $j=\mrvi$.

\begin{figure}
	\begin{center}
		\resizebox{10cm}{!}{


\def\vequals{\rotatebox{90}{$=$}}

\begin{tikzpicture}[scale=1, Wvertex/.style={circle, draw=black, fill=white, scale=1}, bvertex/.style={circle, draw=black, fill=black, scale=0.3}]

\node [bvertex, label={[font=\scriptsize] below left:$v_{i}$} ] (v1) at (-1.2, 0.4) {};
\node [bvertex, label={[font=\scriptsize] below right:$v_{i+1}$}] (v2) at (0.2, 0.4) {};


\node [bvertex] (u0) at (-0.5, 2) {};
  \node [font=\scriptsize] at (-0.5, 1.5) {$q_i$};
  \node [font=\scriptsize] at (-0.5, 1.2) {$\vequals$};
  \node [font=\scriptsize] at (-0.5, 0.85) {$\rvi^\mrvi$};

\node [bvertex] (u1) at (-1.1, 2) {};
\node [bvertex] (u2) at (0.1, 2) {};

\node [bvertex] (u3) at (-1.7, 2) {};
\node [bvertex] (u4) at (0.7, 2) {};

\node [bvertex, label={[font=\scriptsize] above:$\rvi^3$}]
	(u5) at (-2.3, 2) {};
\node [bvertex] (u6) at (1.3, 2) {};

\node [bvertex, label={[font=\scriptsize] above:$\rvi^2$}]
	(u7) at (-2.9, 2) {};
\node [bvertex, label={[font=\scriptsize] above:{\kern13pt$\rvi^{\nrvi\!-\!1}$}}]
	(u8) at (1.9, 2) {};

\node [bvertex, label={[font=\scriptsize] above left:$\rvi^1$}] (u9) at (-3.5, 2) {};
\node [bvertex, label={[font=\scriptsize] above right:$\rvi^\nrvi$}] (u10) at (2.5, 2) {};

\node [bvertex] (w1) at (-0.25, 3.0) {};
\node [bvertex] (w2) at (-0.75, 3.0) {};
\node [bvertex] (w3) at (-1.25, 3.0) {};

\draw [dashed] (v1) -- (v2) node [pos=0.5, below=1pt, scale=0.5, color=black] {$4$};
---------------------------------
\draw[green!75!black] (w1) -- (u2) node [pos=0.4, right=0pt, scale=0.5, color=black] {$4$};

\draw[green!75!black] (w1) -- (u0) node [pos=0.4, left=0pt, scale=0.5, color=black] {$2$};
\draw[green!75!black] (w2) -- (u0) node [pos=0.4, left=0pt, scale=0.5, color=black] {$0$};
\draw[green!75!black] (w3) -- (u0) node [pos=0.4, left=0pt, scale=0.5, color=black] {$0$};
\draw[green!75!black] (w1) -- (w2) node [pos=0.5, above=0pt, scale=0.5, color=black] {$4$};
\draw[green!75!black, dashed] (w3) -- (w2) node [pos=0.5, above=0pt, scale=0.5, color=black] {$4$};

\draw[line width=0.65mm, red] (u0) -- (v1) node [pos=0.5, left=1pt, scale=0.5] {$2$};
\draw[line width=0.65mm, red] (u0) -- (v2) node [pos=0.5, right=1pt, scale=0.5] {$2$};

\draw[blue] (v1) -- (u1) node [pos=0.7, left=1pt, scale=0.5] {$6$};
\draw[blue] (v1) -- (u3) node [pos=0.7, left=1pt, scale=0.5] {$4$};
\draw[blue] (v1) -- (u5) node [pos=0.7, left=1.3pt, scale=0.5] {$4$};
\draw[blue] (v1) -- (u7) node [pos=0.7, left=1.7pt, scale=0.5] {$3$};
\draw[blue] (v1) -- (u9) node [pos=0.7, left=3pt, scale=0.5] {$2$};

\draw[blue] (v2) -- (u2) node [pos=0.7, right=1pt, scale=0.5] {$6$};
\draw[blue] (v2) -- (u4) node [pos=0.7,  right=1pt, scale=0.5] {$4$};
\draw[blue] (v2) -- (u6) node [pos=0.7, right=1.3pt, scale=0.5] {$4$};
\draw[blue] (v2) -- (u8) node [pos=0.7, right=1.7pt, scale=0.5] {$3$};
\draw[blue] (v2) -- (u10) node [pos=0.7, right=3pt, scale=0.5] {$2$};

-----------------------------------

\draw[line width=0.65mm, red] (u0) -- (u1) node [midway, below=1pt, scale=0.5] {$0$};
\draw[line width=0.65mm, red] (u0) -- (u2) node [midway, below=1pt, scale=0.5] {$0$};

\draw[line width=0.65mm, red] (u2) -- (u4) node [midway,  above=1pt, scale=0.5] {$0$};
\draw[line width=0.65mm, red] (u1) -- (u3) node [midway,  above=1pt, scale=0.5] {$0$};

\draw[line width=0.65mm, red, dashed] (u4) -- (u6) node [midway, above=1pt, scale=0.5] {$0$};
\draw[line width=0.65mm, red, dashed] (u3) -- (u5) node [midway, above=1pt, scale=0.5] {$0$};

\draw[line width=0.65mm, red] (u6) -- (u8) node [midway, above=1pt, scale=0.5] {$0$};
\draw[line width=0.65mm, red] (u5) -- (u7) node [midway, above=1pt, scale=0.5] {$0$};
  
\draw[line width=0.65mm, red] (u7) -- (u9) node [midway, above=1pt, scale=0.5] {$0$};
\draw[line width=0.65mm, red] (u8) -- (u10) node [midway, above=1pt, scale=0.5] {$0$};

\end{tikzpicture}}
    \end{center}
    \caption{Structure of $\fri$ and $\br_i$, with edge coefficients}
    \label{fig:multiplicity5}
\end{figure}

Each edge $\vi \vipo \in \lk(\vz)$ occurs in the sum in \eqref{eq:tlr1} only as part of $\lk(q_i)$, when $q_i=\rvi^\mrvi$ exists, so the coefficient of $x_\vi x_\vipo$ is at most $4$.  Thus, the contribution from $\lk(\vz)$, using the eigenequation for $x_\vz$, is
\begin{equation}\label{eq:mul1}
   S_0 = \dss_{u \in V(\br)\cap N_2(\vz) } d_\br(u)\,
	\sum_{vw \in \lk(u)\cap \lk(\vz)} x_v x_w
\le 4\sum_{i=1}^{d-1} x_\vi x_\vipo = 4\lambda x_\vz^2 = 4\lambda .
\end{equation}

Assuming $\fri$ is nonempty, the part of the sum in \eqref{eq:tlr1} coming from $vw \in E(\fri)$, $i \in [d-1]$, is
\begin{equation*}
   S_i = \dss_{u \in V(\br_i)\cap N_2(\vz) } d_\br(u)\,
	\sum_{vw \in \lk(u)\bsh\lk(\vz)} x_v x_w
\end{equation*}
We estimate $S_i$ by computing the sum of the coefficients, i.e., 
$$\pS_i = \dss_{u \in V(\br_i)\cap N_2(\vz) } d_\br(u)\,|\lk(u)\bsh\lk(\vz)|
= \dss_{j=1}^\nrvi d_\br(\rvi^j)\,|\lk(\rvi^j)\bsh\lk(\vz)| .$$
Let $d^-(\rvi^j)$ be the degree of $\rvi^j$ in $\fri^{j-1}$ (or $0$ if $j=1$), and $d^+(\rvi^j)$ be its degree in $\fri^j$ (or $0$ if $j=\nrvi$).
For $j \notin \{1, \mrvi, \nrvi\}$ we have
$$|\lk(\rvi^j)\bsh\lk(\vz)| = |\lk(\rvi^j)| = d(\rvi^j)-1
 = (d^-(\rvi^j) + d^+(\rvi^j) + 3)-1
 = d^-(\rvi^j) + d^+(\rvi^j) + 2.$$
For $j = \mrvi$, if $j \ne 1, \nrvi$ we have
$$|\lk(\rvi^j)\bsh\lk(\vz)| = |\lk(\rvi^j)|-1 = d(\rvi^j)-2
 = (d^-(\rvi^j) + d^+(\rvi^j) + 4)-2
 = d^-(\rvi^j) + d^+(\rvi^j) + 2.$$
If $j=1$ we must reduce the above values by $1$ since there is no edge $\vi \rvi^{j-1}$, and similarly if $j=\nrvi$ we must reduce these values by $1$ since there is no edge $\vipo \rvi^{j+1}$.  These reductions are independent, and do not depend on whether $\mrvi=1$ or $\mrvi=\nrvi$ or both.  Therefore,
$$\pS_i = \sum_{j=1}^\nrvi d_\br(\rvi^j) \left(d^-(\rvi^j) + d^+(\rvi^j) + 2\right) - d_\br(\rvi^1) - d_\br(\rvi^\nrvi) .$$

We are going to compare $\pS_i$ to $|E(\fri)|$.  The number of edges in $\fri$ at levels $1\hf$ and $2$ relative to $\vz$ is just $2\nrvi$.   The edges at higher levels belong to some $\fri^j$.
If $\fri^j$ is nonempty, then it contains
$2d^+(\rvi^j)$ edges of $\st(\rvi^j) \cup \lk(\rvi^j)$, and
$2d^-(\rvi^{j+1})$ edges of $\st(\rvi^{j+1}) \cup \lk(\rvi^{j+1})$, but these two sets overlap in two edges.
Thus, $|E(\fri^j)| \ge 2d^+(\rvi^j) + 2d^-(\rvi^{j+1})-2$.
Hence,
$$|E(\fri)| \ge
  2\nrvi + \sum_{j:\fri^j \ne \emptyset} \left(2d^+(\rvi^j) +
	2d^-(\rvi^{j+1})-2\right)
= 2\nrvi +
   \sum_{(j, \sigma) : d^\sigma(\rvi^j) > 0} (2d^\sigma(\rvi^j)-1) $$
where in the last sum $j \in [\nrvi]$ and $\sigma \in \{-, +\}$.

Therefore, when $\fri$ is nonempty,
\begin{equation}\label{eq:diff}
 2|E(\fri)| - \pS_i \geq
  \sum_{j=1}^\nrvi (4-2d_\br(\rvi^j))
  + d_\br(\rvi^1) + d_\br(\rvi^\nrvi)
  + \sum_{(j,\sigma): d^\sigma(\rvi^j) > 0}
	\left( (4-d_\br(\rvi^j)) d^\sigma(\rvi^j)-2 \right)
.\end{equation}
In the first sum, only terms with $j \in \{1, \mrvi, \nrvi\}$ can be nonzero.
In the final sum, only terms with $d_B(\rvi^j) > 2$, which requires $j=\mrvi$, can be negative.  (Fig.~\ref{fig:multiplicity5} shows a situation where we have a negative term in the final sum.)
 We consider several situations.

(i) If $\fri$ is empty or $1 = \mrvi = \nrvi$ then $\pS_i=0$, so $\pS_i \le 2|E(\fri)|$.

(ii) Suppose that $1 = \mrvi < \nrvi$ or $1 < \mrvi = \nrvi$; these situations are symmetric so we may assume $1 = \mrvi < \nrvi$.  Then $d_\br(\rvi^1)=3$ and $d_\br(\rvi^\nrvi) = 1$.  The only possible negative term in the final sum of \eqref{eq:diff} is for $(j,\sigma) = (1,+)$, which is at least $(4-3)(1)-2=-1$.  Hence $2|E(\fri)|-\pS_i \ge (4-2(3))+(4-2(1))+3+1+(-1) = 3$, and so $\pS_i \le 2|E(\fri)|$.

(iii) Suppose that $1 < \mrvi < \nrvi$.  Then $d_\br(\rvi^1)=d_\br(\rvi^\nrvi) = 1$ and $d_\br(\rvi^\mrvi) = 4$.  We may have two negative terms in the final sum of \eqref{eq:diff}, for $(j,\sigma) = (\mrvi, \pm)$. Each negative term is equal to $(4-4)d^\sigma(\rvi^j)-2 = -2$.  Therefore $2|E(\fri)|-\pS_i \ge 2(4-2(1)) + (4-2(4)) + 1+1 + 2(-2) = -2$, i.e., $\pS_i \le 2|E(\fri)|+2$.

In situations (i) and (ii), since each term $x_v x_w$ in $S_i$ is at most $x_\uz^2$, we get $S_i \le \pS_i x_\uz^2 \le 2|E(\fri)| x_\uz^2$.  In situation (iii), $x_\vi x_{q_i}$ and $x_\vipo x_{q_i}$ have coefficient at least $1$ in $S_i$, so we have
$$S_i
	\le
	  x_\vi x_{q_i} +x_\vipo x_{q_i} +
	  (\pS_i-2)x_\uz^2
	\le
	  x_\vi +x_\vipo +
	  2|E(\fri)|x_\uz^2
.$$
Thus, in all cases $S_i \le x_\vi +x_\vipo + 2|E(\fri)|x_\uz^2$.

Hence, using our estimates above and the Cauchy-Schwarz inequality,
\begin{align}
   2\lambda R
	&\le 2\lambda z + S_0 + \sum_{i=1}^{d-1} S_i
	\nonumber\\
	&\le 2\lambda z + 4\lambda + \sum_{i=1}^{d-1} \left(
		 x_\vi +x_\vipo + 2|E(\fri)|x_\uz^2 \right)
	\nonumber\\
	&\le 2\lambda z + 4\lambda +
		2\sum_{i=1}^d x_\vi +
		2(|E(G)|-(2d-1))x_\uz^2
	\nonumber\\
	&\le 2\lambda z + 4\lambda + 2\sqrt{dz} + (4n-4d-4)x_\uz^2.
		\label{eq:o2}
\end{align}

Substituting $\eqref{eq:o2}$ into \eqref{eq:o1}, it follows that when $n$ is large enough,
\begin{equation}\label{eq:o2-summary}
4n -16 \leq \lambda^3 
    \leq \lambda^2z \leq 4d + \lp 2\lambda z + 4\lambda + 2\sqrt{dz} + 4n-4d -4 \rp - \paren{\sqrt{d + R\lambda}-\sqrt{d}}^2.
\end{equation}

Cancelling terms and rearranging the inequality, we obtain that 
$$\paren{\sqrt{d+R\lambda}-\sqrt{d}}^2 \leq 2\lambda (z + 2) + 2\sqrt{dz} + 12,$$
which can be written as 
\begin{equation}\label{eq:o3}
\frac{(\lambda R)^2}{\paren{\sqrt{d+\lambda R}+\sqrt{d}}^2} \leq  2\lambda (z + 2)+ 2\sqrt{dz} + 12.
\end{equation}
From here, we want to give an upper bound on $\lambda R$. Note that from \eqref{eq:o2-summary}, we also have
\begin{align*}
\lambda^2 z & \leq 4d + (2\lambda z + 4\lambda + 2\sqrt{dz} + 4n -4d-4 )\\
            & \leq 4n + 2\lambda z + 4\lambda + 2\sqrt{dz}\\
            &\leq 4n + 2\lambda z + 4\lambda + 2z\sqrt{d},
\end{align*}
since $z\geq \lambda>1$.
Thus by the fact that $\lambda^3 \geq 4n-16$, we obtain that
\begin{equation}\label{eq:z}
z \leq \frac{4n+4\lambda}{\lambda^2-2\lambda-2\sqrt{d}} \leq \frac{\lambda^3+16+4\lambda}{\lambda^2-2\lambda-\sqrt{\lambda^3+16}} 
\leq \paren{1+o(1)}\lambda.
\end{equation}
Since $\lambda^3 \leq \lambda^2 z \leq 4n+ 2\lambda z + 4\lambda + 2\sqrt{dz}$, we also have
\begin{align*}
    4n &\geq \lambda^3 - 2\lambda z -4 \lambda -2\sqrt{d z}\\
    &\geq\lambda^3  - 2\lambda (1+o(1))\lambda -4 \lambda - \sqrt{(\lambda^3+16) (1+o(1))\lambda}\\
    &\geq \lambda^3 - 3(1+o(1))\lambda^2 -4\lambda\\
    &\geq \left(\lambda-(1+o(1)) \right)^3.
\end{align*}
Thus, we have
\begin{equation}\label{eq:lambda_bound}
\lambda\leq \sqrt[3]{4n}+(1+o(1)).
\end{equation}
Combining with Lemma \ref{lem:spec_wheel}, we get an asymptotic estimation of $\lambda$. 
\begin{equation*}
    \lambda=(1+o(1))\sqrt[3]{4n}.
\end{equation*}

Recall that $\lambda \leq z$. Hence, using \eqref{eq:z}, we have $z = (1+o(1))\lambda = (1+o(1))\sqrt[3]{4n}.$
Consequently we obtain from \eqref{eq:o2} that $\lambda R = O(n)$, which implies that $\paren{\sqrt{d+\lambda R}+\sqrt{d}}^2 = O(n)$. Now it follows from \eqref{eq:o3} that 
$$\lambda R = O\left(\sqrt{n\lambda z+ n\sqrt{dz}}\right) = O\left(\sqrt{n\lambda^2+n^{3/2}\lambda^{1/2}}\right)= O(n^{5/6}).$$
Substituting $\lambda R$ into \eqref{eq:o1} and using the fact that $\lambda^3 \geq 4n -16$, we obtain that
$$4n -16 \leq 4d + O(n^{5/6}),$$
which implies that $d\geq n-O(n^{5/6})$. This completes the proof of Claim \ref{cl:weak_big_degree}.
\end{proof}

\begin{proof}[Proof of Lemma \ref{lem:weak_big_degree}]
In order to further improve the lower bound on $d$ (as claimed in Lemma \ref{lem:weak_big_degree}), we need to give a non-trivial upper bound on $x_\uz^2 = \max_{v\neq \vz} x_v^2$.
We claim $x_\uz=O(n^{-1/3})$.

Let $\dd = d_G(\uz)$ and $\{u_1, u_2, \cdots, u_\dd\}$ be the neighbors of $\uz$ in $G$.  Since $G$ is outerplanar and has no $K_{2,3}$ subgraph, $\vz$ and $\uz$ have at most two common neighbors, so $\dd \le n+2-d = O(n^{5/6})$.
Most of the inequalities shown in Claim \ref{cl:weak_big_degree} hold in similar forms.
However, we have to treat any terms that involve $x_\vz$ separately from other terms, so our definitions of $\dR$ and $\dbr$ will be slightly different.

By the eigenequation for $x_\uz$, allowing for the possibility that some $\ui$ is $\vz$, and using $ab \le (a^2+b^2)/2$, we have
$$\lambda x_\uz^2
            = \dss_{i=1}^{\dd -1} x_\ui x_\uipo
        \leq 2x_\vz x_\uz + \dss_{u \in N(\uz)\bsh\{\vz\}} x_u^2
            = 2x_\uz + z'
$$
where we define $z' = \sum_{u \in N(\uz)\bsh\{\vz\}} x_u^2$.
Let $\dbr$ be the subgraph of $G$ consisting of the edges in $(\bigcup_{u \in N(\uz)\bsh\{\vz\}} \lk(u)) \bsh\st(\uz)$ and their endvertices.  Here $\dbr$ is similar in structure to $\br$, but $\dbr$ is missing the edges in $\lk(\ui)$ if some $\ui$ is $\vz$.
In a similar way to \eqref{eq:lz1} and \eqref{eq:lz2}, if we apply the eigenequations again and Cauchy-Schwartz, we have
\begin{equation*}
    \lambda z' \leq 2x_\uz + 2x_\uz\sqrt{\dd z'} + R',
\end{equation*}
where $R' = \sum_{vw \in E(\dbr)} x_{v} x_{w}$.


It follows from the same logic as in \eqref{eq:z-algebra} that 
\begin{align*}
    z'
       & \leq \frac{4\dd x_\uz^2}{\lambda^2}
	+ \frac{2(R'+2x_\uz)}{\lambda}
		- \paren{\sqrt{\frac{\dd x_\uz^2} {\lambda^2} +
		\frac{R'+2x_\uz}{\lambda}}
	-\frac{\sqrt\dd x_\uz}{\lambda}}^2.
\end{align*}
Then
\begin{align}
                 \lambda^2 (z'+2x_\uz)
                   \leq & 4\dd x_\uz^2 + 2\lambda (R'+ 2x_\uz) -\lp \sqrt{\dd x_\uz^2 + \lambda (R'+2x_\uz)} - \sqrt{\dd }x_\uz \rp^2 + 2\lambda^2 x_\uz \nonumber \\
                   \leq & 4\dd x_\uz^2 + 2\lambda R' + (2\lambda^2+4\lambda) x_\uz. \nonumber
\end{align}
Hence we have 
\begin{equation}\label{eq:o4}
        (4n-16)x_\uz^2  \leq \lambda^3 x_\uz^2  \leq  \lambda^2 (z'+2x_\uz) \leq 4\dd x_\uz^2 + 2\lambda R' + (2\lambda^2+4\lambda) x_\uz.
\end{equation}

We will use an inequality like \eqref{eq:o2} to bound $2\lambda R'$.  Similarly to \eqref{eq:tlr1}, we have
\begin{align}
2\lambda R'
  &\le \dss_{vw\in E(\dbr)} \lambda (x_{v}^2 + x_{w}^2)
    = \dss_{u \in V(\dbr)} d_\dbr(u)\, \lambda x_u^2
	\nonumber\\
  &= \dss_{u \in V(\dbr)\cap N_1(\uz) } d_\dbr(u)\, \lambda x_u^2 
    + \dss_{u \in V(\br)\cap N_2(\uz) } d_\dbr(u)\, \lambda x_u^2
	\nonumber\\
  &\le 2\dss_{i=1}^\dd \lambda x_\ui^2 
    + \dss_{u \in V(\br)\cap N_2(\uz) } d_\dbr(u)\,\lambda x_u^2 
	\nonumber\\
  &\le 2\lambda \left(x_\vz^2 + \sum_{u \in N(\uz)\bsh\{\vz\}} x_u^2\right)
    + \dss_{u \in V(\dbr)\cap N_2(\uz) } d_\dbr(u)\,
	\sum_{vw \in \lk(u)} x_v x_w.
	\nonumber\\
  &= 2\lambda (z'+1)
    + \dss_{u \in V(\dbr)\cap N_2(\uz) } d_\dbr(u)\,
	\sum_{vw \in \lk(u)} x_v x_w.
	\label{eq:Rprime}
\end{align}
For any vertex $v$ the terms containing $x_v$ in the sum above are
\begin{equation}\label{eq:vwcoeff}
\sum_{u \in N(v) \cap N_2(\uz) \cap V(\dbr)}\; \sum_{w \in \lk\iv(uv)} d_\dbr(u) x_v x_w .
\end{equation}
For each $u$ there are at most two vertices $w \in \lk\iv(uv)$.  

We will break the sum of \eqref{eq:Rprime} into four parts: $S_0'$ from $vw \in \lk(\uz)$, $S_1'$ from $vw \notin \lk(\uz)$ but $vw \in \lk(\vz)$, $S_2'$ from $vw \notin \lk(\uz)$ but $vw \in \st(\vz)$, and $S_3'$ from all remaining terms.

We can bound $S_0'$ in a similar way to \eqref{eq:mul1}, as
$$S_0' = \sum_{u \in V(\dbr)\cap N_2(\uz) } d_\dbr(u)\,
  \sum_{vw \in \lk(u)\cap\lk(\uz)} x_v x_w
	\le 4\lambda x_\uz^2 .$$

For $S_1', S_2', S_3'$ we use the following analysis.  If $v \notin N[\uz]$ then \eqref{eq:vwcoeff} has at most two vertices $u$, both of which belong to the same subgraph $\dbri = \dbr \cap \far(\ui \uipo, \uz)$.  Thus, one $u$ has degree at most $4$ in $\dbr$, and the other has degree at most $2$.  Since there are at most two vertices $w$ for each $u$, for $v \notin N[\uz]$ the sum of the coefficients of all terms $x_v x_w$ is at most $12$.  Moreover, if we take any edge $vw$ at level $1\hf$ or higher relative to $\uz$, then one endvertex $v$ satisfies $v \notin N[\uz]$, and $x_v x_w$ occurs in \eqref{eq:vwcoeff} with a total coefficient of at most $6$.

Hence, using the eigenequation for $x_\vz$, we can bound $S_1'$ as
$$S_1' = \sum_{u \in V(\dbr)\cap N_2(\uz) } d_\dbr(u)\,
  \sum_{vw \in (\lk(u)\cap\lk(\vz))\bsh\lk(\uz)} x_v x_w
	\le 6 \sum_{vw \in \lk(\vz)} x_v x_w
	= 6\lambda x_{\vz}^2 = 6\lambda.$$

Consider terms in the sum of \eqref{eq:Rprime} with $vw \notin \lk(\uz)$ and $vw \in \st(\vz)$, i.e., $v = \vz$.  If $v = \vz \notin N[\uz]$ then the total coefficient of $x_v x_w$ is at most $12$, as described above.  If $v=\vz \in N[\uz]$ then $\vz = \ui$ for some $i$, and the only possible vertices $u$ in \eqref{eq:vwcoeff} are $q'_{i-1} \in \lk\iv(u_{i-1} \ui)$ and $q'_i \in \lk\iv(\ui \uipo)$, which both have degree at most $2$ in $\dbr$, and for each such $u$ there is only one $w$ such that $vw \notin \lk(\uz)$.  Thus, the total coefficient of $x_v x_w$ is at most $4$.  In either case,
\begin{equation*}
S_2' = \sum_{u \in V(\dbr)\cap N_2(\uz) } d_\dbr(u)\,
  \sum_{vw \in (\lk(u)\cap\st(\vz))\bsh\lk(\uz)} x_v x_w
	\le 12 x_\vz x_\uz = O(x_\uz).
\end{equation*}

Finally, the terms in the sum of \eqref{eq:Rprime} with $vw \notin \lk(\uz) \cup \lk(\vz) \cup \st(\vz)$ give
\begin{align*}
S_3' &= \sum_{u \in V(\dbr)\cap N_2(\uz) } d_\dbr(u)\,
  \sum_{vw \in \lk(u)\bsh(\lk(\uz)\cup\lk(\vz) \cup\st(\vz))} x_v x_w \\
   &\le 6|E(G)\bsh(\lk(\vz)\cup\st(\vz))| x_\uz^2
	= 6(2n-2d-2)x_\uz^2
	= O(n^{5/6})x_\uz^2.
\end{align*}

Therefore, using the fact that $\lambda = O(n^{1/3})$,
\begin{align}
2 \lambda R' &\le 2\lambda(z'+1) + S_0' + S_1' + S_2' + S_3'
	\nonumber \\
  &= 2\lambda (z'+1) + 4\lambda x_\uz^2 + 6\lambda + O(x_\uz) +
		O(n^{5/6}) x_\uz^2
	\nonumber \\
  & = 2\lambda z' + 8\lambda + O(x_\uz) +
	O(n^{5/6}) x_\uz^2.\label{eq:o5}
\end{align}
Substituting \eqref{eq:o5} into \eqref{eq:o4}, and using $\dd = O(n^{5/6})$, we have
\begin{align}
    (4n-16) x_\uz^2  \leq \lambda^2 (z'+2x_\uz) & \leq 4\dd x_\uz^2 + 2\lambda R' + (2\lambda^2+4\lambda) x_\uz \nonumber\\
                      & \leq 4\dd x_\uz^2 + \lp 2\lambda z' + 8\lambda + O(x_\uz) + O(n^{5/6}) x_\uz^2  \rp + (2\lambda^2+4\lambda) x_\uz \nonumber\\
                      &\leq 2\lambda z' + O(n^{5/6}) x_\uz^2 + 8\lambda + (2\lambda^2 +4\lambda +O(1)) x_\uz.
                      \label{eq:o5a}
\end{align}
Rearranging the inequality in \eqref{eq:o5a}, we first obtain an upper bound on $z'$:
\[z' \leq \frac{O(n^{5/6}) x_\uz^2 + (4\lambda+O(1)) x_\uz+8\lambda }{ \lambda^2-2\lambda} =  O \lp n^{1/6} x_\uz^2 + \frac{4x_\uz}{\lambda} +\frac{1}{\lambda}\rp.\]
Now using the upper bound on $z'$ and \eqref{eq:o5a}, we have the following inequality:
\begin{align*}
    (4n-16) x_\uz^2  & \leq 2\lambda z' + O(n^{5/6}) x_\uz^2 + 8\lambda + (2\lambda^2 +4\lambda +O(1)) x_\uz\\
                       & = O\lp n^{5/6} x_\uz^2  + \lambda^2 x_\uz + \lambda\rp.  
\end{align*}
It follows from the fact that $\lambda = O(n^{1/3})$ that 
\begin{equation*}
    x_\uz = O(n^{-1/3}).
\end{equation*}
Now using the bound $x_\uz = O(n^{-1/3})$ in \eqref{eq:o2}, we obtain a better bound on $d = d_G(\vz)$ in Claim \ref{cl:weak_big_degree}: 
\begin{equation*}
    4n-16 \leq \lambda^3 \leq 4d + 2\lambda z + 4\lambda + 2\sqrt{dz} +
	4(n-d)\, O(n^{-2/3}), 
\end{equation*}
which gives us
\begin{align*}
(4n-4d)(1-O(n^{-2/3})) &\le 16 + 2\lambda z + 4\lambda + 2\sqrt{dz} \\
	&= O(1) + O(n^{2/3}) + O(n^{1/3}) + O(\sqrt{n\cdot n^{1/3}})
		= O(n^{2/3})
\end{align*}
and thus $d\geq n-O(n^{2/3})$.
This completes the proof of Lemma \ref{lem:weak_big_degree}.
\end{proof}

\begin{lemma}\label{lem:v1-degree1}
$d_{\cH}(v_1) = 1$. Moreover, $x_{v_2} \geq x_{v_1}$.
\end{lemma}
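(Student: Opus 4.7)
The plan is to first establish the \textit{a priori} bound $x_{v_1}\ge 1/\lambda$ using only the single hyperedge $\{\vz,v_1,v_2\}$, and then to leverage it twice: once inside a switching argument that rules out $d_\cH(v_1)\ge 2$, and once in the simplified eigenequation at $v_1$ to deduce $x_{v_2}\ge x_{v_1}$.

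For the \textit{a priori} bound, note that $v_1v_2\in\lk(\vz)$ implies $\{\vz,v_1,v_2\}\in E(\cH)$, so retaining only the contribution of this triangle in the eigenequations at $v_1$ and at $v_2$ gives $x_{v_2}\le\lambda x_{v_1}^2$ and $x_{v_1}\le\lambda x_{v_2}^2$. Substituting the first inequality into the second yields $x_{v_1}\le\lambda^3 x_{v_1}^4$, hence $x_{v_1}\ge 1/\lambda$.

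To prove $d_\cH(v_1)=1$, I split on whether $v_1v_2$ is an outer-cycle edge. If it is, then $\lk(v_1)$ is an induced path in $G$ from $\vz$ to $v_2$; the first interior vertex $p$ of this path (if any) must be adjacent to $\vz$, so $p=v_j$ for some $j\ge 2$, and the edge $v_1p$ together with the inducedness of $\lk(\vz)=v_1v_2\cdots v_d$ forces $j=2$, contradicting $p\ne v_2$. Hence $\lk(v_1)=\{\vz v_2\}$ and $d_\cH(v_1)=1$. If $v_1v_2$ is instead a chord, then $\fr_1=\far(v_1v_2,\vz)$ together with $v_1v_2$ is a triangulated polygon on at least three vertices, and a standard ear-clipping argument produces a vertex $c\ne v_1,v_2$ with $d_G(c)=2$ whose two neighbors $a,b$ lie in $V(\fr_1)\setminus\{\vz\}$ and satisfy $\{a,b,c\}\in E(\cH)$. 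Form $\cH'$ from $\cH$ by deleting $c$ and the hyperedge $\{a,b,c\}$ and then re-inserting $c$ with neighbors $\vz$ and $v_1$ (so the outer edge $\vz v_1$ of $\cH\setminus c$ becomes a chord and $\{\vz,v_1,c\}$ is the new interior face); $\cH'$ is outerplanar. Evaluating at the Perron eigenvector $x$ of $\cH$,
\[
P_{\cH'}(x)-P_\cH(x)=3x_c\bigl(x_\vz x_{v_1}-x_ax_b\bigr)=3x_c\bigl(x_{v_1}-x_ax_b\bigr).
\]
Since $a,b\ne\vz$, Lemma~\ref{lem:weak_big_degree} gives $x_ax_b\le x_\uz^2=O(n^{-2/3})$, whereas the \textit{a priori} bound together with Lemma~\ref{lem:weak_big_degree} ensures $x_{v_1}\ge 1/\lambda=\Omega(n^{-1/3})$; thus the gain is strictly positive for large $n$. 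This forces $\lambda(\cH')\ge P_{\cH'}(x)/\|x\|_3^3>\lambda(\cH)$, contradicting the extremality of $\cH$.

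Once $d_\cH(v_1)=1$ is in hand, $\lk(v_1)=\{\vz v_2\}$, so the eigenequation at $v_1$ collapses to $\lambda x_{v_1}^2=x_{v_2}$. Combined with $x_{v_1}\ge 1/\lambda$, this yields $x_{v_2}=\lambda x_{v_1}^2=(\lambda x_{v_1})\,x_{v_1}\ge x_{v_1}$, as required. The main obstacle is the positivity of the switching gain in the chord case: it hinges on the asymptotic separation $\Omega(n^{-1/3})\gg O(n^{-2/3})$ between $x_{v_1}$ and $x_ax_b$ supplied by the \textit{a priori} bound and Lemma~\ref{lem:weak_big_degree}. The remaining combinatorial ingredients---that $\fr_1$ admits an ear distinct from $v_1,v_2$ (essentially because $v_1v_2$ lies on only one polygon face, so $v_1$ and $v_2$ cannot both be ears of the triangulation), and that re-attachment on $\vz v_1$ preserves outerplanarity---are routine.
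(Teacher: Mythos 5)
Your proof is correct, and it takes a genuinely different route from the paper's. For $d_{\cH}(v_1)=1$, the paper does one global surgery: it detaches all of $\fr_1 = \far(v_1v_2,\vz)$ from $v_2$ and reattaches it to $\vz$ (renaming $v_2$ to $\vz$ throughout $\fr_1$ and reflecting the embedding), so the net gain in $P_{\cH}$ is at least $x_{v_1}x_{q_1}(x_\vz - x_{v_2}) > 0$; the only input needed from Lemma~\ref{lem:weak_big_degree} is the strict inequality $x_\vz > x_{v_2}$. You instead first establish the \emph{a priori} bound $x_{v_1}\ge 1/\lambda$ from the single hyperedge $\{\vz,v_1,v_2\}$ (the paper derives the same bound, but only after Lemma~\ref{lem:v1-degree1} inside the proof of Theorem~\ref{thm:outer}; you correctly observe it needs no part of Lemma~\ref{lem:v1-degree1}), then split: the case where $v_1v_2$ is an outer edge is eliminated purely combinatorially via inducedness of $\lk(\vz)$, and the chord case is handled by a \emph{local} ear-plucking surgery moving a single degree-2 vertex, with positivity of the gain coming from the asymptotic gap $x_{v_1}=\Omega(n^{-1/3})\gg x_a x_b = O(n^{-2/3})$. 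For $x_{v_2}\ge x_{v_1}$, the paper swaps the two eigenvector coordinates and compares $P_\cH$; you read it off directly from $\lambda x_{v_1}^2 = x_{v_2}$ and $x_{v_1}\ge 1/\lambda$. Your route leans more heavily on the quantitative estimates of Lemma~\ref{lem:weak_big_degree} (the paper's flip only needs $x_\vz>x_{v_2}$), but in exchange the surgery is local and the outerplanarity check is routine, whereas the paper's reattachment of an entire subgraph requires the reflection argument; both are sound.
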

\begin{proof}

Assume for the sake of contradiction that $d_{\cH}(v_1) \geq 2$. Recall that $\cH$ is edge-maximal. It follows that there must exist a vertex $q_1 \ne \vz$ such that $\{v_1, v_2, q_1\}$ is a hyperedge, and $q_1$ is a vertex of $\fr_1 = \far(v_1 v_2, v_0)$.
Let $\fr_1'$ be $\fr_1$ but with $v_2$ renamed as $v_0$.  Then $G' = G-(V(\fr_1)-\{v_1, v_2\}) \cup \fr_1'$
is outerplanar (we find the outerplanar embedding by flipping $\fr_1'$ over, i.e., reflecting it), and $G'$ is the shadow of a $3$-uniform hypergraph $\cH'$ that can be obtained from $\cH$ by replacing each hyperedge $\{v_2, u, w\}$ where $u, w \in V(\fr_1)$ by a hyperedge $\{\vz, u, w\}$.
Suppose $\bm{x}$ is the Perron-Frobenius eigenvector of $\cH$.
Since $x_\vz > x_{v_2}$ by Lemma \ref{lem:weak_big_degree}, it follows that
\begin{align*}
\ds\sum_{\{i_1,i_2,i_3\}\in E(\cH')}x_{i_1}x_{i_2} x_{i_3} - \ds\sum_{\{i_1,i_2,i_3\}\in E(\cH)}x_{i_1}x_{i_2} x_{i_3} & \geq x_{v_1} x_{q_1} (x_\vz-x_{v_2}) > 0.
\end{align*}
This implies that $\lambda(\cH') > \lambda(\cH)$, which contradicts $\cH$ attaining the maximum spectral radius. 

It remains to show that $x_{v_2} \geq x_{v_1}$. If $x_{v_2} < x_{v_1}$, then let $\bm{x'}$ be obtained from $\bm{x}$ by setting $x'_{v_1} = x_{v_2}$, $x'_{v_2} = x_{v_1}$ and keeping every other entry the same. Since $d_{\cH}(v_1) = 1$, it follows that $P_{\cH}(\bm{x'}) > P_{\cH}(\bm{x})$, which contradicts $\bm{x}$ being the Perron-Frobenius eigenvector of $\cH$.
\end{proof}

Now we are ready to show Theorem \ref{thm:outer}.

\begin{proof}[Proof of Theorem \ref{thm:outer}]
Let $\cH$ be an outerplanar $3$-uniform hypergraph on $n$ vertices with maximum spectral radius. Let $G$ be the shadow of $\cH$. Suppose the Perron–Frobenius eigenvector $\bm{x}$ of the adjacency tensor of $\cH$ is normalized so that the maximum eigenvector entry is $1$. Let $\vz$ be the vertex with the maximum eigenvector entry and $\{v_1, v_2, \cdots, v_d\}$ be the neighbors of $\vz$ in the clockwise order of the outerplanar drawing of $G$.

By Lemma \ref{lem:spec_wheel}, we have that $d(\vz)\geq n - O(n^{2/3})$ and for every other vertex $u \neq \vz$, $x_{u} = O(n^{-1/3})$.
Now we claim that $x_{v_1} = \Omega(n^{-1/3})$. By Lemma \ref{lem:v1-degree1}, we have that $d_{\cH}(v_1) =1$, i.e., $\{v_1, v_2, \vz\}$ is the unique hyperedge containing $v_1$. It follows by Lemma \ref{lem:v1-degree1} and the eigenequation for $x_{v_1}$ that 
$$\lambda x_{v_1}^2 = x_\vz x_{v_2} = x_{v_2} \geq x_{v_1}.$$
Together with \eqref{eq:lambda_bound}, this implies that 
$$x_{v_1} \geq \frac{1}{\lambda} = \Omega(n^{-1/3}).$$

Now we claim that for every vertex $u \in V(G)\backslash\{\vz\}$, $u$ is a neighbor of $\vz$ in $G$.
Suppose not.  The hyperedges incident with $\vz$ form a path in the dual of $G$ and there must be a hyperedge $\{w, s, t\}$ that is a leaf of the dual tree (excluding the outer face) but not an end of this path.
Then $w,s,t \ne \vz$ and one of these vertices, say $w$, has degree $2$ in $G$ and degree $1$ in $\cH$.
Now similarly to Lemma \ref{lem:v1-degree1}, consider the hypergraph $\cH'$ obtained from $\cH$ by by removing the hyperedge $\{w, s, t\}$ and adding the hyperedge $\{w, \vz, v_1\}$. It follows that 
\begin{align*}
\ds\sum_{\{i_1,i_2,i_3\}\in E(\cH')}x_{i_1}x_{i_2} x_{i_3} - \ds\sum_{\{i_1,i_2,i_3\}\in E(\cH)}x_{i_1}x_{i_2} x_{i_3} & \geq x_{w} x_\vz x_{v_1} - x_{w} x_{s} x_{t}.
\end{align*}
Note that $x_s x_t = O(n^{-2/3})$ while $x_\vz x_{v_1} = \Omega(n^{-1/3})$. It follows that $x_{w} x_\vz x_{v_1} > x_{w} x_{s} x_{t}$, which implies that $\lambda(\cH') > \lambda(\cH)$, contradicting $\cH$ being the extremal hypergraph of maximum spectral radius. Hence every vertex $u \in V(G)\backslash\{\vz\}$ is a neighbor of $\vz$ in $G$.

Again by the fact that $\cH$ attains the maximum spectral radius, it follows that $\cH$ is the unique $3$-uniform hypergraph $\cF_n$ with $K_1 + P_{n-1}$ as it shadow.
\end{proof}

\end{document}